\theoremstyle{definition}
\newtheorem{defi}{\raggedright Definition}[section]
\newtheorem{prop}[defi]{\raggedright Proposition}
\newtheorem{thm}[defi]{\raggedright Theorem}
\newtheorem{exa}[defi]{\raggedright Example}
\newtheorem{lem}[defi]{\raggedright Lemma}
\newtheorem{thmn}{\raggedright Theorem}
\newcommand{\mfK}{\mathsf{K}}
\newcommand{\mfL}{\mathsf{L}}
\newcommand{\mino}{\setminus \{ \varnothing \}}
\newcommand{\ve}{\varepsilon}
\title[Hom complexes and box complexes associated to $r$-graphs]{Simple $S_r$-homotopy types of \mbox{Hom complexes} and \mbox{box complexes} associated to $r$-graphs}
\author[T.Thansri]{Thorranin Thansri}
\email{s09t253@shinshu-u.ac.jp, fscitnt@ku.ac.th}
\address{Department of Mathematical Sciences, Faculty of Science, Shinshu University, Matsumoto 390-8621, Japan} 
\keywords{Hom complexes, box complexes, simple $G$-homotopy types, acyclic partial $G$-matchings}
\date{\today}
\begin{document}
\baselineskip=5mm
\abovedisplayskip=2pt
\belowdisplayskip=2pt
\maketitle

\begin{abstract}
For a pair $(H_1,H_2)$ of graphs, Lov\'{a}sz introduced a polytopal complex called the Hom complex $\text{Hom}(H_1,H_2)$, in order to estimate topological lower bounds for chromatic numbers of graphs. The definition is generalized to hypergraphs. Denoted by $K_r^r$ the complete $r$-graph on $r$ vertices. Given an $r$-graph $H$, we compare $\text{Hom}(K_r^r,H)$ with the box complex $\mathsf{B}_{\text{edge}}(H)$, invented by Alon, Frankl and Lov\'{a}sz. We verify that $\text{Hom}(K_r^r,H)$ and $\mathsf{B}_{\text{edge}}(H)$, both are equipped with right actions of the symmetric group on $r$ letters $S_r$, are of the same simple $S_r$-homotopy type.
\end{abstract}

\section{Introduction}
In this paper, we consider homotopy types of cell complexes associated to $r$-graphs which are introduced in order to solve the problem on their chromatic numbers. The idea of assigning a cell complex to graphs was due to Lov\'{a}sz in \cite{MR514625} in his proof of the Kneser's conjecture \cite{kneser}. To a graph $G$, Lov\'{a}sz assigned a simplicial complex $\mathsf{N}(G)$, called the {\it neighborhood complex}. By using its topological property, that is to say, the $k$-connectivity of $\mathsf{N}(G)$, he succeeded in discovering a new lower bound for the chromatic number of $G$.\par
In the case of hypergraphs, the first topological lower bound for the chromatic number of an $r$-graph was derived by a simplicial complex $\mathsf{B}_{\text{edge}}(G)$ called the box complex, which was invented by Alon, Frankl and Lov\'{a}sz \cite{MR857448}. It also played an important role in a proof of the Erd\H{o}s' conjecture \cite{MR0465878}, which is a generalization of Kneser's conjecture. \par
Lov\'{a}sz also introduced a polytopal complex associated to a pair $(G,H)$ of graphs, called the Hom complex $\text{Hom}(G,H)$. It is a generalization of $\mathsf{N}(H)$ in view of $\text{Hom}(K_2,H)$ and $\mathsf{N}(H)$ having the same homotopy type \cite{MR2243723}. Here $K_2$ denotes the complete graph on 2 vertices. There are also many researches 
on the homotopy type of $\text{Hom}(K_2,H)$, comparing with other simplicial complexes constructed for (hyper)graph coloring problems such as $\mathsf{B}_{\text{chain}}(G)$ by K\v{r}\'{i}\v{z} \cite{MR1081939} or $\mathsf{B}(G), \mathsf{B}_0(G)$ by Matou\v{s}ek and Ziegler \cite{MR2073516}. However, there are still no results in the case of $r$-graphs. The motivation of this research is to find an $r$-graph which generalizes the results to the case of $r$-graphs. \par
The construction of the Hom complex is also extended to hypergraphs by Kozlov in \cite{MR2383129}. We notice here that the complete $r$-graph on $r$ vertices $K_r^r$ is the only $r$-graph having one edge as $K_2$, and that both $\text{Hom}(K_r^r,H)$ and $\mathsf{B}_\text{edge}(H)$ are equipped with right actions of the symmetric group on $r$ letters $S_r$. We obtain the following result on equivariant simple homotopy types by making use of equivariant acyclic partial matchings: 
\begin{thmn}[Theorem \ref{main_thm}]
For any $r$-graph $H$, the Hom complex $\text{Hom}(K_r^r,H)$ and the box complex $\mathsf{B}_{\text{edge}}(H)$ have the same simple $S_r$-homotopy type. 
\end{thmn}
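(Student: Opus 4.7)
The plan is to invoke an equivariant version of Forman's discrete Morse theory: I would construct an $S_r$-equivariant acyclic partial matching on the face poset of $\text{Hom}(K_r^r, H)$ whose unmatched (critical) cells are precisely the cells of a natural $S_r$-equivariant copy of $\mathsf{B}_{\text{edge}}(H)$ sitting inside $\text{Hom}(K_r^r, H)$. Such a matching packages a sequence of $S_r$-equivariant elementary collapses, which is exactly the data of a simple $S_r$-homotopy equivalence.

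First I would unwind the definitions. Since $K_r^r$ carries the single edge $\{1,\ldots,r\}$, a cell of $\text{Hom}(K_r^r,H)$ is an $r$-tuple $\eta=(A_1,\ldots,A_r)$ of nonempty subsets of $V(H)$ such that every transversal $(v_1,\ldots,v_r)\in A_1\times\cdots\times A_r$ is an edge of $H$; cells of $\mathsf{B}_{\text{edge}}(H)$ are $r$-tuples of the same shape, subject also to the disjointness condition $A_i\cap A_j=\varnothing$ for $i\neq j$. Both complexes carry the $S_r$-action permuting coordinates, and $\mathsf{B}_{\text{edge}}(H)$ is thereby identified with an $S_r$-subcomplex of $\text{Hom}(K_r^r,H)$.

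To build the matching, I would fix a total order on $V(H)$. For each non-disjoint cell $\eta$, let $v^{*}(\eta)$ be the smallest vertex belonging to two or more of the subsets $A_i$; this assignment depends only on the unordered multiset $\{A_1,\ldots,A_r\}$ and is therefore $S_r$-invariant. The matching pairs $\eta$ with the cell obtained by toggling the membership of $v^{*}(\eta)$ at a distinguished ``redundant'' slot. Three standard verifications then remain: that the pairing respects the face relation (so defines a genuine partial matching of the Hasse diagram), that its critical cells are exactly the pairwise-disjoint tuples, and that it is acyclic, the latter by filtering the face poset through the values of $v^{*}$ and $\sum_i|A_i|$ and ruling out alternating cycles layer by layer.

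The principal obstacle is preserving $S_r$-equivariance in the choice of ``distinguished redundant slot.'' The obvious rule ``delete $v^{*}$ from $A_{i_0}$ with $i_0$ the least index containing $v^{*}$'' fails to be $S_r$-invariant because it privileges an ordering of the coordinates. I would circumvent this either by specifying the pair using only data invariant under the stabilizer of $\eta$ in $S_r$, or by first constructing the matching on $S_r$-orbit representatives and then extending $S_r$-equivariantly, checking case by case that each representative's match is compatible with its stabilizer so that the extension is well defined. Once an equivariant acyclic matching with the correct critical set is established, the equivariant discrete Morse theorem delivers the desired simple $S_r$-homotopy equivalence between $\text{Hom}(K_r^r,H)$ and $\mathsf{B}_{\text{edge}}(H)$.
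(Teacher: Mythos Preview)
Your proposal rests on a misreading of the two complexes, and the matching you sketch collapses onto the wrong object.

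First, the cells of $\mathsf{B}_{\text{edge}}(H)$ are \emph{not} $r$-tuples $(A_1,\ldots,A_r)$ of pairwise disjoint sets. By definition $\mathsf{B}_{\text{edge}}(H)$ is a simplicial complex whose vertices are the ordered edges $(v_1,\ldots,v_r)\in V(H)^r$ with $v_1\cdots v_r\in E(H)$; a simplex is a subset $F$ of such vertices whose coordinate projections $\text{pr}_j(F)$ are pairwise disjoint and span a complete $r$-partite sub-$r$-graph. Different simplices can have the same projections: for $H=K^r_{1,\ldots,1,2,2}$ the $1$-simplex $\{(a_0,\ldots,a_{r-3},b_1,c_1),(a_0,\ldots,a_{r-3},b_2,c_2)\}$ and the $1$-simplex $\{(a_0,\ldots,a_{r-3},b_1,c_2),(a_0,\ldots,a_{r-3},b_2,c_1)\}$ project to the same tuple of sets, and both sit inside a $3$-simplex. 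So the map $F\mapsto(\text{pr}_1(F),\ldots,\text{pr}_r(F))$ is not injective, and your description of the box cells is wrong.

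Second, because $H$ is \emph{nondegenerate}, the sets $A_1,\ldots,A_r$ in a Hom cell are already pairwise disjoint: if $v\in A_i\cap A_j$ with $i\neq j$, the transversal with $v$ in both slots would have to be an edge of $H$, which is impossible. Hence there is no ``redundant slot'' and your toggling rule is vacuous; every cell of $\text{Hom}(K_r^r,H)$ is critical under your matching, and you have collapsed nothing.

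The actual relationship is the reverse of what you assumed. The paper defines $S_r$-equivariant poset maps $i:P_{K_r^r,H}\to\mathcal{F}(\mathsf{B}_{\text{edge}}(H))$, $i(\varphi)=\prod_j\varphi(j)$, and $p:\mathcal{F}(\mathsf{B}_{\text{edge}}(H))\to P_{K_r^r,H}$, $p(F)(j)=\text{pr}_j(F)$, with $p\circ i=1$ but $i\circ p\neq 1$ in general. Thus $\text{sd}\,\text{Hom}(K_r^r,H)\cong\Delta(i(P_{K_r^r,H}))$ sits as an $S_r$-subcomplex of $\text{sd}\,\mathsf{B}_{\text{edge}}(H)$, and the work is to $S_r$-collapse the \emph{larger} barycentric subdivision $\text{sd}\,\mathsf{B}_{\text{edge}}(H)$ onto the \emph{smaller} $\text{sd}\,\text{Hom}(K_r^r,H)$. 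The paper does this via an acyclic partial $S_r$-matching on chains, built from the closure operator $i\circ p$; one then invokes a separate argument (stellar $G$-subdivisions) to pass between each complex and its barycentric subdivision. Your discrete-Morse instinct is correct, but the matching must live on $\mathcal{F}(\text{sd}\,\mathsf{B}_{\text{edge}}(H))$ and exploit $i\circ p$, not on $\mathcal{F}(\text{Hom}(K_r^r,H))$ with a nonexistent overlap vertex.
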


\section{Preliminaries}
In this section, we collect some definitions which are needed in our arguments. First, we write $[k]$ as the set $\{0,1,\ldots,k \}$. 

\subsection*{$r$-graphs} A {\it hypergraph} is a triple $H=(V(H),E(H),\ve_H)$ of sets $V(H), E(H)$ and a map $\ve_H : E(H)\rightarrow \coprod_{r\geq 1} \left( V(H)^r/S_r \right)$. Here $S_r$ is the symmetric group on $r$-letters acting on $V(H)^r$ by permutation. Given two hypergraphs $H_1$ and $H_2$, a {\it hypergraph homomorphism} is a pair $(f_V,F_E)$ of $f_V : V(H_1)\rightarrow V(H_2)$ and $f_E : E(H_1)\rightarrow E(H_2)$ satisfying the following commutative diagram:
\begin{equation*}
\xymatrix{
E(H_1) \ar[rr]^{\ve_{H_1}} \ar[d]_{f_E} & & \coprod_{r\geq 1} \left( V(H_1)^r/S_r \right) \ar[d]^{\tilde{f}_V} \\
E(H_2) \ar[rr]_{\ve_{H_2}} & & \coprod_{r\geq 1} \left( V(H_2)^r/S_r \right), 
}
\end{equation*}
where $\tilde{f}_V$ is the map induced by $f_V$. Then, we obtain the category $\text{\bf H}$ of hypergraphs and hypergraph homomorphisms.\par
We denote here an equivalence class $[v_0,v_1,\ldots,v_{r-1}]\in V(H)^r/S_r$ simply by $v_0 v_1\ldots v_{r-1}$. A hypergraph $H$ is {\it $r$-uniform} if $\text{Im}\,\ve_H \subset V(H)^r/S_r$. $H$ is {\it simple} if $\ve_H$ is injective. Moreover, $H$ is {\it nondegenerate} if 
\begin{equation*}
\text{Im}\,\ve_H \subset \coprod_{r\geq 1} \left\{ v_0 \cdots v_{r-1} \in V(H)^r/S_r \,\big|\,v_i \not= v_j\ \text{whenever}\ i\not=j \right\}.
\end{equation*}
For simplicity, simple nondegenerate $r$-uniform hypergraphs are called {\it $r$-graphs}. Denoted by $\text{\bf H}_r$ the full subcategory of $\text{\bf H}$ consisting of $r$-graphs. For example, the complete $r$-graph on $m$ vertices, denoted by $K_m^r$, is an $r$-graph with $|V(K_m^r)|=m$ and $\ve_{K_m^r}$ being bijective. Since the map $\ve_H$ of an $r$-graph $H$ is a bijection $E(H)\rightarrow \text{Im}\,\ve_H$, for simply, we identify $E(H)$ with $\text{Im}\,\ve_H$, and write, for example, $v_0 \ldots v_{r-1}\in E(H)$. 
\subsection*{The category $\text{{\bf C}-}G$} Let $G$ be a group. Denoted by $G^{op}$ the group whose elements are elements of $G$ and multiplication defined by $gh\ (\text{in}\ G^{op})=hg\ (\text{in}\ G)$. For an object $X$ of a category $\text{\bf C}$, a {\it right action} of $G$ on $X$ is a homomorphism $\rho:G^{op}\rightarrow \text{Hom}_{\text{\bf C}}(X,X)$. We denote by $\text{\bf C}$-$G$ the category whose objects are all pairs $(X,\rho)$ of object $X$ of $\text{\bf C}$ and a right action $\rho$. A morphism from $(X_1,\rho_1)$ to $(X_2,\rho_2)$ is a morphism $f\in \text{Hom}_{\text{\bf C}}(X_1,X_2)$ such that $f\circ \rho_1(g)=\rho_2(g)\circ f$ for any $g\in G^{op}$. We note here that, for two categories $\text{\bf C}$ and $\text{\bf D}$, a functor $F : \text{\bf C}\rightarrow \text{\bf D}$ induces a functor $F\text{-}G: \text{{\bf C}-}G\rightarrow \text{{\bf D}-}G$.   
\subsection*{Simplicial complexes and polytopal complexes}
An {\it (abstract) simplicial complex} is a pair $(V,\mfK)$ of a set $V$ and a collection $\mfK$ of subsets of $V$ closed under taking subsets. We denote a simplicial complex $(V,\mfK)$ briefly by $\mfK$ and write $V$ as $V(\mfK)$. Each elements in $\mfK$ is called a {\it simplex} or a {\it cell} of $\mfK$. If $F\in \mfK$ and $F'\subset F$, we say that $F'$ is a {\it face} of $F$, and, at the same time, $F$ is a {\it coface} of $F'$. A {\it subcomplex} of $\mfK$ is a simplicial complex $\mfK'$ such that $F\in \mfK'$ implies that $F\in \mfK$.\par
For two simplicial complex $\mfK$ and $\mfK'$, a {\it simplicial map} $f: \mfK \rightarrow \mfK'$ is a map $f : V(\mfK)\rightarrow V(\mfK')$ satisfying that $f(F)\in \mfK'$ if $F\in\mfK$. Let $\text{\bf ASC}$ denote the category of simplicial complexes and simplicial maps. In particular, an object in the category $\text{{\bf ASC}-}G$ is called a {\it simplicial $G$-complex}.\par
Let $P$ be a convex polytope. A {\it proper face} of $P$ is of the form $\text{conv}(V(P)\cap h)$, where $h$ is a hyperplane satisfying $(\text{Int}\,P)\cap h=\varnothing$ and $V(P)$ denotes the vertex set of $P$. The term ``coface" for convex polytopes is also defined analogously. Note here that the empty set is also a proper face of any polytopes.\par
A {\it polytopal complex} is a collection $\mfK$ of convex polytopes in some $\mathbb{R}^N$ satisfying that (1) every face of $P\in \mfK$ is also in $\mfK$, and (2) the intersection of $P_1,P_2\in \mfK$ is a face of both. Elements in $\mfK$ are called {\it cells} of $\mfK$. The {\it underlying space} of a polytopal complex $\mfK$ is the subspace of $\mathbb{R}^N$ defined by $|\mfK|=\bigcup_{P\in \mfK} P$. A {\it subcomplex} of $\mfK$ is a subcollection $\mfK'$ of $\mfK$ which is itself a polytopal complex.\par
For two polytopal complexes $\mfK_1$ and $\mfK_2$, a {\it polytopal map} $f : \mfK_1\rightarrow \mfK_2$ is a map $f : |\mfK_1|\rightarrow |\mfK_2|$ satisfying that the restrictions $f|_{F}$ to each $F\in \mfK_1$ is affine. Moreover, a polytopal map $f : \mfK_1\rightarrow \mfK_2$ is said to be {\it regular} if $F\in \mfK_1$ implies that $f(F)\in \mfK_2$. In this paper, we will make use of the category $\text{\bf PTC}$ consisting of polytopal complexes and polytopal maps and its subcategory $\text{\bf PTC}_{\text{reg}}$ consisting of polytopal complexes and regular polytopal maps. An object of the category $\text{{\bf PTC}-}G$ (or $\text{\bf PTC}_{\text{reg}}\text{-}G$) is called a polytopal $G$-complex.
\subsection*{Posets} 
Let {\bf Poset} denote the category of posets and poset maps (i.e. a map $f : P\rightarrow Q$ satisfying $f(x)\leq_Q f(y)$ whenever $x\leq_P y$). An object in the category $\text{{\bf Poset}-}G$ is called a $G$-poset.\par
Given a poset $P$, we call a totally ordered subset $A=\{ A_0, A_1, \ldots, A_k \}$, where each $A_i\in P$ and $A_0 <_P A_1 <_P \cdots <_P A_{k-1}$ a {\it $k$-chain} in $P$. The number $k$ is called the {\it length} of $A$, denoted by $\# A$. In this paper, elements in a chain $A$ in $P$ are written by $A_i\ (i \in [\# A])$. The {\it order complex} of $P$, denoted by $\Delta(P)$, is the simplicial complex on $P$ whose $k$-simplices are the $k$-chains in $P$. A poset map $f : P\rightarrow Q$ induces a simplicial map $\Delta(f): \Delta(P)\rightarrow \Delta(Q)$, and so $\Delta(\cdot)$ is a covariant functor $\text{\bf Poset} \rightarrow \text{\bf ASC}$. \par
The {\it face poset} of a simplicial (polytopal) complex $\mfK$, denoted by $\mathcal{F}(\mfK)$, is a poset of all nonempty cells of $\mfK$ ordered by inclusion. Each simplicial (polytopal) map $f : \mfK\rightarrow \mfK'$ induces a poset map $\mathcal{F}(f):\mathcal{F}(\mfK)\rightarrow \mathcal{F}(\mfK')$. So we obtain covariant functors  $\mathcal{F}(\cdot) : \text{\bf ASC}\rightarrow \text{\bf Poset}$ and $\mathcal{F}(\cdot) : \text{\bf PTC}\rightarrow \text{\bf Poset}$\par
For $x,y\in P$, we call $x$ {\it covers} $y$, and write $x\succ y$, if $y <_P x$ and there is no $z\in P$ such that $y <_P z <_P x$.
\section{Equivariant simple homotopy types}
Now let $\mfK$ be a simplicial or a polytopal complex. 
Maximal cells of $\mfK$ are called {\it facets}. A cell $\sigma\in\mfK$ is {\it free} if $\sigma$ is a proper face of only one facet $\varphi_\sigma\in\mfK$. A collection $\mathcal{F}$ of free cells of $\mathsf{K}$ is said to be {\it independently free} if, for any $\sigma,\ \sigma'\in \mathcal{F}$, $\sigma\not=\sigma'$ implies that there is no cell in $\mfK$ which is a coface of both $\sigma$ and $\sigma'$.\par

The {\it deletion} of a cell $F\in \mfK$, denoted by $\text{dl}_F(\mfK)$, is the subcomplex of $\mfK$ consisting of all $F'\in \mfK$ such that $F$ is not a face of $F'$. We also define the deletion $\text{dl}_S(\mfK)$ of a set $S$ of cells of $\mfK$ from $\mfK$ as the intersection of $\text{dl}_F(\mfK)$ over all $F\in S$.\par
Now we define the notion of $G$-collapsings, following Larri\'{o}n et. al. in \cite{MR2423402}. Note here that, for a simplicial (polytopal) $G$-complex $\mfK$, the orbit $\sigma G$ of a free cell $\sigma\in \mfK$ is a collection of free cells in $\mfK$. Let $\sigma$ be a free cell of $\mfK$ with $\dim\varphi_\sigma=\dim\sigma+1$. Suppose $\sigma G$ being independently free. An {\it elementary $G$-collapsing} of $\mfK$ with respect to $\sigma$ is defined as the process to obtain $\text{dl}_{\sigma G}(\mfK)$ from $\mfK$. Conversely, an {\it elementary $G$-expanding} of $\mfK$ with respect to $\sigma$ is defined to be the process to obtain $\mfK$ from $\text{dl}_{\sigma G}(\mfK)$.\par
We denote by $\mfK \searrow_G \mfK^\prime$ if there exists an elementary $G$-collapsing of $\mfK$ onto its $G$-subcomplex $\mfK^\prime$. Moreover, we say that $\mfK$ {\it $G$-collapses} onto a $G$-subcomplex $\mfK^\prime$ if there is a sequence of elementary $G$-collapsings leading from $\mfK$ to $\mfK^\prime$. Two simplicial (polytopal) $G$-complex $\mfK$ and $\mfL$ are said to have the same {\it simple $G$-homotopy type} if there is a sequence of elementary $G$-collapsings and elementary $G$-expandings leading from $\mfK$ to $\mfL$. Such a sequence is called a {\it formal $G$-deformation}.

\subsection{Simple $G$-homotopy types of subdivisions}
It is well-known on a relationship between a simplicial (polytopal) complex $\mfK$ and its barycentric subdivision $\text{sd}\,\mfK$ that they are of the same simple homotopy type. Howover, we need an equivariant version of this result in our argument.\par
Following the construction of a formal deformation by Kozlov in \cite{MR2243723}, it is useful to define an equivariant stellar subdivision of $\mfK$.

\begin{defi}
Let $\mfK$ be a simplicial $G$-complex and $\sigma$ be a simplex of $\mfK$ such that, in $\sigma G$, $g\not= g'$ implies that no simplex in $\mfK$ being a coface of both $\sigma g$ and $\sigma g'$. The {\it stellar $G$-subdivision} of $\mfK$ at the orbit $\sigma G$, denoted by $\text{sd}\,(\mfK,\sigma G)$, is the simplicial $G$-complex on $V(\mfK)\coprod \sigma G$ with the following set of simplices:
\begin{align*}
\text{sd}(\mfK,\sigma G)=&\bigcap_{g\in G} \{ F\in \mfK\,|\,\sigma g\ \text{is not a face of}\ F \}\\
&\hspace{2ex} \cup \bigcup_{g\in G} \{ F\coprod \{ \sigma g \}  \,|\,F \in \mfK,\ \sigma g\ \text{is not a face of}\ F,\ \text{and}\ \sigma g \cup F\in \mfK \}. 
\end{align*}
We can define the stellar subdivision for a polytopal $G$-complex $\mfK$ analogously by replacing elements in $\sigma G$ with their barycenters. 
\end{defi}
Making use of stellar $G$-subdivisions, we obtain our desired result:
\begin{prop}\label{sd}
Let $\mfK$ be a simplicial or polytopal $G$-complex. Then $\mfK$ and its barcycentric subdivision $\text{sd}\,\mfK$ have the same simple $G$-homotopy type. 
\end{prop}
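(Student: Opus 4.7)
The plan is to first reduce the statement to the case of a single stellar $G$-subdivision, and then establish that elementary case by an explicit formal $G$-deformation. I would enumerate the orbits of simplices (or cells, in the polytopal case) of $\mfK$ of dimension at least $1$ in strictly decreasing order of dimension, with ties broken by some $G$-invariant total order on orbits, and apply the stellar $G$-subdivision construction at each orbit in turn; their composition should recover $\text{sd}\,\mfK$. At each stage I would need to verify that the hypothesis of the preceding definition holds, namely that distinct $\sigma g$ and $\sigma g'$ in the current orbit share no coface in the current complex. The crux is that any such common coface would necessarily contain $\sigma g \cup \sigma g'$, a simplex of strictly higher dimension than $\sigma$, and an inductive tracking of what iterated stellar subdivisions do to cofaces rules out the persistence of such a simplex by the time the current orbit is processed.

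For the elementary step, given $\sigma G$ satisfying the compatibility hypothesis, I would build an intermediate $G$-complex $\mfL$ by attaching to $\mfK$, orbit-wise, the cones $b_{\sigma g} \ast \overline{\sigma g}$ over the closed cells; the compatibility hypothesis ensures that these cones meet only along faces common to $\sigma g$ and $\sigma g'$ that are already in $\mfK$, so $\mfL$ is a well-defined $G$-subcomplex containing both $\mfK$ and $\text{sd}(\mfK, \sigma G)$. I would then exhibit two formal $G$-deformations $\mfK \nearrow_G \mfL$ and $\mfL \searrow_G \text{sd}(\mfK, \sigma G)$. The first would add, orbit-wise in order of increasing $\dim \tau$, the cells $\{\{b_{\sigma g}\} \cup \tau\}_g$, with each orbit of free cells paired against its unique coface orbit one dimension above. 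The second would remove, orbit-wise in order of decreasing dimension, each coface $\rho$ of $\sigma g$ in $\mfK$ paired with $\{b_{\sigma g}\} \cup \rho$, finishing with $\sigma g$ itself paired with $\{b_{\sigma g}\} \cup \sigma g$.

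The main obstacle will be executing the elementary $G$-expansions and $G$-collapses equivariantly when the stabilizer of $\sigma$ acts nontrivially on the vertices of $\sigma$: in that case there is no $G$-equivariant way to isolate a single free-cell/coface pair within the cone $b_{\sigma g} \ast \overline{\sigma g}$, and one must package several classical moves together into one orbit move, or pass through the polytopal category where the cones can be realized as cells of appropriate symmetry. The fact that the definition of stellar $G$-subdivision in the excerpt treats the whole orbit $\sigma G$ as a single unit, together with the explicit setup of the polytopal framework in the preliminaries, strongly suggests that the polytopal viewpoint is the intended one. The polytopal case of the proposition then follows by the same argument with ``closed simplex'' replaced by ``closed cell'' throughout.
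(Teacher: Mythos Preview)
Your overall strategy---reduce to a single stellar $G$-subdivision and realize that step by expanding to an intermediate cone and then collapsing---is exactly the paper's. But your intermediate complex $\mfL$ is too small. You attach only the cone $b_{\sigma g}\ast\overline{\sigma g}$ over the \emph{closed cell} $\sigma g$, whereas what is needed is the cone over the \emph{closed star} $\mathrm{st}_{\mfK}(\sigma g)$, i.e.\ over all cofaces of $\sigma g$ together with their faces. Once the iteration has passed the first (top-dimensional) orbit, the current $\sigma g$ is no longer a facet: it acquires cofaces such as $\{b_{\tau}\}\cup\sigma g$ coming from the previously inserted barycenters of higher cells $\tau\supset\sigma g$. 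Consequently $\text{sd}(\mfK,\sigma G)$ contains cells of the form $\{b_{\sigma g}\}\cup\tau'$ with $\tau'$ in the link of $\sigma g$, and your second deformation explicitly wants to pair each coface $\rho\supsetneq\sigma g$ with $\{b_{\sigma g}\}\cup\rho$; none of these cells lie in your $\mfL$. So as written, $\mfL$ contains neither $\text{sd}(\mfK,\sigma G)$ nor the matching cells for the second collapse, and both formal $G$-deformations $\mfK\nearrow_G\mfL$ and $\mfL\searrow_G\text{sd}(\mfK,\sigma G)$ break down after the very first step of the iteration. Replacing $\overline{\sigma g}$ by $\mathrm{st}_{\mfK}(\sigma g)$ throughout fixes this and recovers precisely the paper's argument.

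Your worry about nontrivial stabilizers is legitimate but not the obstacle you make it out to be. The paper's notion of elementary $G$-collapse already packages an entire orbit $\sigma G$ into a single move and only requires that \emph{distinct cells} in $\sigma G$ have no common coface; it does not require a $G$-equivariant choice of a single free pair inside one cone. In the expansion $\mfK\nearrow_G\mfL$, one simply processes orbits of faces of the $\mathrm{st}_{\mfK}(\sigma g)$'s by dimension; each such orbit is independently free in the partially built complex, regardless of how the stabilizer of $\sigma$ permutes the vertices of $\sigma$. So no detour through the polytopal category is needed for this point.
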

\begin{proof}
Choose a cell $\sigma$ from each orbit such that they preserves inclusion order in $\mathcal{F}(\mfK)$ and construct a totally ordered set $L$ of these $\sigma$'s, such that $\bigcup_{\sigma \in L} \sigma G=\mathcal{F}(\mfK)$ as sets. Then a simplicial $G$-complex obtained by a sequence of stellar $G$-subdivisions of $\mfK$ at the orbits of simplices in decreasing order with respect to $L$ is isomorphic to $\text{sd}\,\mfK$. Hence, it suffices to consider a formal deformation leading from $\mfK$ to the stellar $G$-subdivision $\text{sd}(\mfK,\sigma G)$ at the orbit of the maximum cell $\sigma\in L$.\par
First, add cones over each $\text{st}_\mfK(\sigma g)$, $g\in G$. This construction implies that, for each face $\sigma'$ of $\sigma$, $\sigma' G$ is a collection of free cells which is independently free. Hence, we obtain a sequence of elementary $G$-expandings leading to cones. Here we obtain the unique facet containing $\sigma g\in \sigma G$ in each added cone. Then we obtain our desired result by taking an elementary $G$-collapsing with respect to $\sigma G$.  
\end{proof}

\section{Hom complexes}
The construction the Hom complexes was extended to hypergraphs by Kozlov \cite{MR2383129}. In this paper, however, we will consider only the one associated to a pair of $r$-graphs. 
\begin{defi}
Let $H_1,H_2$ be $r$-graphs. A map $f:V(H_1)\rightarrow 2^{V(H_2)}\mino$ is called a {\it hypergraph multihomomorphism} if every map $f_0 : V(H_1)\rightarrow V(H_2)$ satisfying $f_0(v)\in f(v)$ for any $v\in V(H_1)$ induces a hypergraph homomorphism. 
\end{defi}
For hypergraphs $H_1,H_2$, we write $P_{H_1,H_2}$ as the poset of all hypergraph multihomomorphisms ordered by $f\leq g$ if and only if $f(v)\subset g(v)$ for any $v\in V(H_1)$. The Hom complex $\text{Hom}(H_1,H_2)$ is construced from this poset as follows:
\begin{defi}
Let $H_1,H_2$ be $r$-graphs. The {\it Hom complex} is the polytopal complex
\begin{equation*}
\text{Hom}(H_1,H_2)=\left\{ \prod_{v\in V(H_1)} \Delta^{f(v)} \right\}_{f\in P_{H_1,H_2}}.
\end{equation*}
Here $\Delta^S$ denotes a simplex with the vertex set $S$.
\end{defi}

Denoted by $\text{\bf H}^i_r$ a subcategory of $\text{\bf H}_r$ consisting of $r$-graphs and injective hypergraph homomorphisms. By definition, we obtain the following commutative diagrams concerning functorial properties: 
\begin{equation*}
\xymatrix{
\text{\bf H}_r \ar[rr]^{P_{H,-}} & & \text{\bf Poset} \\
& & \text{\bf PTC} \ar[u]_{\mathcal{F}} \\
\text{\bf H}^i_r \ar @{^{(}->}[uu] \ar[rr]_{\text{Hom}(H,-)} & & \text{\bf PTC}_{\text{reg}} \ar @{^{(}->}[u]
}
\end{equation*}
\begin{equation*}
\xymatrix{
\text{\bf H}^{op}_r \ar[rr]^{P_{-,H}} \ar[rrd]_{\text{Hom}(-,H)} & & \text{\bf Poset} \\
& & \text{\bf PTC} \ar[u]_{\mathcal{F}} \\
(\text{\bf H}^i)^{op}_r \ar @{^{(}->}[uu] \ar[rr]_{\text{Hom}(-,H)} & & \text{\bf PTC}_{\text{reg}} \ar @{^{(}->}[u]
}
\end{equation*}
In particular, we obtain right $\text{Aut}(H_1)$-actions on both the poset $P_{H_1,H_2}$ and the polytopal complex $\text{Hom}(H_1,H_2)$. Furthermore, we can see that $f<g$ in $P_{H_1,H_2}$ if and only if $\prod_{v\in V(H_1)} \Delta^{f(v)}$ is a proper face of $\prod_{v\in V(H_1)} \Delta^{g(v)}$. Therefore, $\mathcal{F}(\text{Hom}(H_1,H_2))$ and $P_{H_1,H_2}$ are $\text{Aut}(H_1)$-isomorphic as posets, and $\text{sd}\,\text{Hom}(H_1,H_2)$ and $\Delta(P_{H_1,H_2})$ are $\text{Aut}(H_1)$-isomorphic as simplicial complexes. \par
\subsection{Comparison between Hom complexes and box complexes }
Let $(G,H)$ be a pair of $r$-graphs. As stated before, we are interested in homotopy type of the Hom complex $\text{Hom}(G.H)$, comparing with simplicial complexes associated to an $r$-graph $H$. We now give the definition of the box complex $\mathsf{B}_\text{edge}(H)$ invented by Alon, Frankl and Lov\'{a}sz in \cite{MR857448}:\par
Recall that the collection $\{A_j \}_{j=0}^{r-1}$ of subsets of $V(H)$ {\it generates the complete $r$-partite sub-$r$-graph in $H$} if, for any $x_j\in A_j,\ j\in [r-1]$, $x_0 x_1\cdots x_{r-1}$ is an edge of $H$. In particular, if $V(H)=\bigcup_{j=1}^r A_j$, $H$ itself is said to be the {\it complete $r$-partite $r$-graph}, denoted by $K^r_{m_0,\ldots,m_{r-1}}$ if $|A_j|=m_j,\ j\in [r-1]$.  

\begin{defi}[See \cite{MR857448}]
Let $H$ be an $r$-graph. A simplicial complex $\mathsf{B}_{\text{edge}}(H)$ is defined to be a pair $(V,\mathsf{B}_{\text{edge}}(H))$ of the vertex set $V$ consisting of all $(v_1,\ldots,v_r)\in V(H)^r$ such that $v_1\cdots v_r\in E(H)$ and the set of simplices $B_{\text{edge}}(H)$ consisting of all subsets $F\subset V$ such that $\{\text{pr}_j(F) \}_{j=1}^r$ is the collection of pairwise disjoint sets generating the complete $r$-partite sub-$r$-graph in $H$. Here $\text{pr}_j(F)$ denotes the projection of $F$ onto its $j$-th factor. 
\end{defi}

Now we consider relationships between the Hom complexes and the box complexes. As stated before, $\text{Hom}(K_2,H)$ has the same (simple) homotopy type as the neighborhood complex $\mathsf{N}(H)$ and other box complexes. In the case of $r$-graph, since $K_2$ has only one edge, we thought that the complete $r$-graph $K_r^r$, which also has only one edge, may play an important role in determining homotopy types of the Hom complexes. Thus, we now compare homotopy types between $\text{Hom}(K_r^r,H)$ and $\mathsf{B}_\text{edge}(H)$. However, we cannot do it directly because $\text{Hom}(K_r^r,H)$ is a polytopal while $\mathsf{B}_\text{edge}(H)$ is a simplicial complex. We consider their face posets and construct two maps between them as follows: 
\begin{align*}
p : \mathcal{F}(\mathsf{B}_{\text{edge}}(H)) \rightarrow P_{K_r^r,H}&;\ p(F)(j)=\text{pr}_j(F); \\
i: P_{K_r^r,H} \rightarrow \mathcal{F}(\mathsf{B}_{\text{edge}}(H))&;\ i(\varphi)=\prod_{j=1}^r \varphi(j).
\end{align*}
Notice here that both $\text{Hom}(K_r^r,H)$ and $\mathsf{B}_\text{edge}(H)$ are equipped with right $S_r$-actions. We claim that both $p$ and $i$ are $S_r$-equivariant poset maps whose composition $p\circ i$ is the identity on $P_{K_r^r,H}$. \par
Indeed, for the $S_r$-equivariance of $p$, given a simplex $S=\{(v_1^j,\ldots,v_r^j)\}_{j\in J}\in \mathcal{F}(\mathsf{B}_{\text{edge}}(H))$ and $\sigma\in S_r$, we have $S\sigma=\{(v_{\sigma(1)}^j,\ldots,v_{\sigma(r)}^j)\}_{j\in J}$. Recall that the right $S_r$-action on $P_{K_r^r,H}$ is given as, for $\sigma\in S_r$, $\sigma: P_{K_r^r,H}\rightarrow P_{K_r^r,H};\ \varphi \mapsto \varphi \circ \sigma$. Hence, for all $l\in [r]$, 
\begin{equation*}
p(S\sigma)(l)=\{ v_{\sigma(l)}^j \}_{j\in J}=p(S)(\sigma(l))=(p(S)\sigma)(l).
\end{equation*}
For the $S_r$-equivariant of $i$, given $f\in P_{K_r^r,H}$ and $\sigma\in S_r$, we have 
\begin{align*}
i(f)\sigma &=\left(\prod_{j=1}^r f(j) \right)\sigma \\
&=\{(v_1,\ldots,v_r)\,|\, v_j\in f(j),\ j\in [r] \}\sigma \\
&=\{(v_{\sigma(1)},\ldots,v_{\sigma(r)})\,|\, v_{\sigma(j)}\in f(\sigma(j)),\ j\in [r] \} \\
&=\prod_{j=1}^r f\circ \sigma(j)=i(f\sigma).
\end{align*}
The injectivity of $i$ implies that the order complex $\Delta(i(P_{K_r^r,H}))$, which can be identified with the barycentric subdivision $\text{sd}\,\text{Hom}(K_r^r,H)$, is an $S_r$-subcomplex of $\text{sd}\,\mathsf{B}_{\text{edge}}(H)$. \par
Here we remark that, in general, the composition $i\circ p$ may not be the identity, as shown in the following example.
\begin{exa}
Consider the complete $r$-partite $r$-graph $K_{1,\ldots,1,2,2}^r$ generated by the collection 
\begin{equation*}
\{ \{a_0 \},\ldots, \{a_{r-3} \}, \{b_1,b_2 \}, \{c_1,c_2 \} \}.
\end{equation*} 
For instance, taking a simplex
\begin{equation*}
F=\{(a_0,\ldots,a_{r-3},b_1,c_1),(a_0,\ldots,a_{r-3},b_2,c_2) \} \in \mathcal{F}(\mathsf{B}_{\text{edge}}(K_{1,\ldots,1,2,2}^r),
\end{equation*}
we find that 
\begin{equation*}
\text{pr}_j(F)=\begin{cases}
\{ a_j \} &\text{if}\,j\in [r-3] \\
\{b_1,b_2 \} &\text{if}\,j=r-2 \\
\{c_1,c_2 \} &\text{if}\,j=r-1.
\end{cases}
\end{equation*}
Hence, $i\circ p(F)\not=F$. With this example, we can conclude that there is an example of $r$-graph $H$ whose poset $i(P_{K_r^r,H})$ is a proper $S_r$-subposet of $\mathcal{F}(\mathsf{B}_{\text{edge}}(H))$.\par
Moreover, we can conclude that $\Delta(i(P_{K_r^r,K_{1,\ldots,1,2,2}^{r}}))$ and $\text{sd}\,B_\text{edge}(K_{1,\ldots,1,2,2}^{r})$ are not isomorphic.
\end{exa}
We also introduce an example of $r$-graph implying that $i\circ p$ being the identity, and hence, two cell complexes are $S_r$-isomorphic:
\begin{exa}
Considering the complete $r$-partite $r$-graph $K_{1,\ldots,1,n}^{r}\ (n\in \mathbb{N})$, we find that each simplex $F$ of $\mathsf{B}_{\text{edge}}(K_{1,\ldots,1,n}^r)$ can be written as the product of sets, $r-1$ sets of them having cardinality $1$. Therefore, $i\circ p=1$.
\end{exa}
Remark here that the structures of both $\text{Hom}(K_r^r,H)$ and $\mathsf{B}_{\text{edge}}(H)$, associated to an $r$-graph $H$, depend on the containment of complete $r$-partite $r$-subgraphs in $H$. If an $r$-graph $H$ containing $K_{m_1,\ldots,m_r}^{r}$ where $|\{i\,|\,m_i\geq 2 \}|\geq 2$, then it also contains the complete $r$-partite $r$-graph $K_{1,\ldots,1,2,2}^{r}$. Together with the above examples, we obtain the following criterion of determining whether the Hom complexes and the box complexes are isomorphic: 
\begin{prop}
Let $H$ be an $r$-graph. Then $\Delta(i(P_{K_r^{r},H})) \cong \text{sd}\,\mathsf{B}_\text{edge}(H)$ if and only if $H$ does not contain the complete $r$-partite sub-$r$-graph $K_{1,\ldots,1,2,2}^{r}$.
\end{prop}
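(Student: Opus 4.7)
The plan is to reduce the claimed simplicial isomorphism to a purely combinatorial condition on the image of $i$. Because $p\circ i=1$, the map $i$ is injective, so $i(P_{K_r^r,H})$ is a subposet of $\mathcal{F}(\mathsf{B}_{\text{edge}}(H))$, and $\Delta(i(P_{K_r^r,H}))$ is a subcomplex of $\text{sd}\,\mathsf{B}_{\text{edge}}(H)$. Since the vertex set of an order complex $\Delta(Q)$ is just $Q$ itself, any abstract simplicial isomorphism between $\Delta(i(P_{K_r^r,H}))$ and $\text{sd}\,\mathsf{B}_{\text{edge}}(H)$ would force equal vertex counts, which combined with the inclusion forces $i$ to be surjective. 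Surjectivity of $i$ is in turn equivalent to $i\circ p=1$, i.e.\ to the statement that every simplex $F\in \mathcal{F}(\mathsf{B}_{\text{edge}}(H))$ is recovered from its projections as $F=\prod_{j=1}^r \text{pr}_j(F)$. Thus the proposition reduces to proving that this ``product'' property holds universally if and only if $H$ avoids $K_{1,\ldots,1,2,2}^r$.

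For the direction ``$\Leftarrow$'', I would proceed by contradiction. Fix a simplex $F$, set $A_j:=\text{pr}_j(F)$, and suppose $F\subsetneq\prod_j A_j$. A short inspection shows that if at most one $A_j$ has cardinality $\geq 2$, then $F=\prod_j A_j$ already, because on the singleton coordinates every tuple is forced, while on the remaining (possibly) fat coordinate the requirement $\text{pr}_j(F)=A_j$ exhausts $A_j$. Hence there must exist distinct indices $j_1,j_2$ with $|A_{j_1}|,|A_{j_2}|\geq 2$. Pick $b_1\neq b_2\in A_{j_1}$, $c_1\neq c_2\in A_{j_2}$, and an arbitrary $a_j\in A_j$ for each $j\notin\{j_1,j_2\}$. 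Since $\{A_j\}$ generates a complete $r$-partite sub-$r$-graph in $H$, all four tuples with one of $b_1,b_2$ in position $j_1$, one of $c_1,c_2$ in position $j_2$, and the chosen $a_j$ elsewhere are edges of $H$, exhibiting a copy of $K_{1,\ldots,1,2,2}^r$ in $H$ and contradicting the hypothesis.

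For the direction ``$\Rightarrow$'', I would reuse the explicit simplex from the preceding example: if $H$ contains $K_{1,\ldots,1,2,2}^r$ generated by $\{a_0\},\ldots,\{a_{r-3}\},\{b_1,b_2\},\{c_1,c_2\}$, then the two-element simplex $F=\{(a_0,\ldots,a_{r-3},b_1,c_1),(a_0,\ldots,a_{r-3},b_2,c_2)\}$ of $\mathsf{B}_{\text{edge}}(H)$ satisfies $i\circ p(F)=\prod_j\text{pr}_j(F)$ of cardinality $4$, so $F\notin i(P_{K_r^r,H})$ and the vertex sets have different cardinalities, precluding any simplicial isomorphism. The only nontrivial step in the whole argument is the coordinate-counting case analysis in the ``$\Leftarrow$'' direction, which rules out a proper inclusion $F\subsetneq\prod_j A_j$ when fewer than two $A_j$'s are non-singleton; once that is in place, the extraction of $K_{1,\ldots,1,2,2}^r$ follows directly from the definition of complete $r$-partite sub-$r$-graph.
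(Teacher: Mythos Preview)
Your proof is correct and follows essentially the same approach as the paper, which does not give a formal proof but relies on the two preceding examples together with the remark that any complete $r$-partite sub-$r$-graph with at least two non-singleton parts contains $K_{1,\ldots,1,2,2}^{r}$. Your explicit reduction of the isomorphism question to surjectivity of $i$ via a vertex-count argument (which tacitly uses finiteness of $H$) and your coordinate case analysis make rigorous the steps the paper leaves implicit.
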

\begin{exa}
Note that the complete $r$-partite $r$-graph $K_{1,\ldots,1,2,2}^{r}$ has $r+2$ vertices. Then, for the complete $r$-graph $K_{n}^{r}$, two simplicial complexes $\Delta(i(P_{K_r^{r},K_{n}^{r}}))$ and $\text{sd}\,\mathsf{B}_\text{edge}(K_n^{r})$ are isomorphic if and only if $n\leq r+1$. 
\end{exa}

\subsection{Simple $S_r$-homotopy type of $\text{Hom}(K_r^r,H)$ and $\mathsf{B}_{\text{edge}}(H)$}
Now we return to the argument of verifying that $\text{Hom}(K_r^r,H)$ and $\mathsf{B}_{\text{edge}}(H)$ have the same simple homotopy type. Our strategy is to show that 
\begin{enumerate}
\item both $\text{Hom}(K_r^r,H)$ and $\mathsf{B}_{\text{edge}}(H)$ have the same simple homotopy type with their barycentric subdivisions, and
\item $\text{sd}\,\mathsf{B}_{\text{edge}}(H)$ $S_r$-collapses onto $\text{sd}\,\text{Hom}(K_r^r,H)$.
\end{enumerate}

The statements in the first step are proved by Proposition \ref{sd}. To prove the second one, we will verify the existence of $S_r$-collapsing of $\text{sd}\,\mathsf{B}_{\text{edge}}(H)$ onto $\Delta(i(P_{K_r^r,H}))$ by making use of an equivariant acyclic partial matching. We give here its definition and its relationships between an equivariant collapsing:  

\begin{defi}
Let $G$ be a finite group and $\mathsf{K}$ be a simplicial $G$-complex. A {\it partial $G$-matching} on $\mathcal{F}(\mathsf{K})$ is a pair $(\Sigma,\mu)$ of a $G$-subset $\Sigma$ of $\mathcal{F}(\mathsf{K})$ and a $G$-equivariant injection $\mu : \Sigma \rightarrow \mathcal{F}(\mathsf{K})\setminus \Sigma$ such that $\mu(x)\succ x$ for any $x\in \Sigma$. Elements in $\mathcal{F}(\mathsf{K})\setminus (\Sigma \cup \mu(\Sigma))$ are called {\it critical}. Such a partial $G$-matching is {\it acyclic} if there is no sequence of distinct elements $x_0,x_1,\ldots,x_t \in \Sigma\ (t\geq 1)$ such that
\begin{equation*}
\mu(x_0)\succ x_1,\ \mu(x_1)\succ x_2,\ldots,\ \mu(x_{t-1})\succ x_t\ \text{and}\ \mu(x_t)\succ x_0.
\end{equation*}
\end{defi}

\begin{prop}
Let $G$ be a finite group, $\mfK$ a simplicial $G$-complex and $\mfK'$ a $G$-subcomplex of $\mfK$. Then $\mfK$ $G$-collapses onto $\mfK'$ if and only if there is an acyclic partial $G$-matching on $\mathcal{F}(\mfK)$ whose set of critical elements is just $\mathcal{F}(\mfK')$. 
\end{prop}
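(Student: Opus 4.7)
The plan is to prove both directions by induction on the number of $G$-orbits of cells in $\Sigma$ (equivalently, on the number of elementary $G$-collapses in a collapsing sequence).

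For the forward direction, suppose $\mfK = \mfK_0 \searrow_G \mfK_1 \searrow_G \cdots \searrow_G \mfK_n = \mfK'$, where the $i$-th step is the elementary $G$-collapse with respect to a free cell $\sigma_i$ of $\mfK_{i-1}$ with facet $\varphi_{\sigma_i}$ of dimension $\dim\sigma_i+1$. I would set $\Sigma = \bigsqcup_{i=1}^n \sigma_i G$ and define $\mu(\sigma_i g) = \varphi_{\sigma_i} g$. Well-definedness on each orbit follows because uniqueness of $\varphi_{\sigma_i}$ forces $\text{Stab}(\sigma_i) \subseteq \text{Stab}(\varphi_{\sigma_i})$; $G$-equivariance is then immediate, and injectivity of $\mu$ follows from independent freeness of each $\sigma_i G$. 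By construction the critical set is $\mathcal{F}(\mfK')$. For acyclicity, suppose a cycle $\mu(x_0)\succ x_1,\ldots,\mu(x_t)\succ x_0$ exists, and let $m$ be minimal such that some $x_j$ lies in $\sigma_m G$. In $\mfK_{m-1}$ the cell $x_j$ is free with unique facet $\mu(x_j)$ of dimension $\dim x_j+1$. If $x_{j-1}$ lies in a later-collapsed orbit $\sigma_k G$ with $k>m$, then $\mu(x_{j-1})$ is still present in $\mfK_{m-1}$ and is a codimension-one coface of $x_j$, hence contained in, and by dimension equal to, $\mu(x_j)$; injectivity of $\mu$ then forces $x_{j-1}=x_j$, a contradiction. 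If instead $x_{j-1}$ lies in $\sigma_m G$, then $\mu(x_{j-1})$ is a common coface of the distinct cells $x_j, x_{j-1}$ of $\sigma_m G$, contradicting independent freeness.

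For the backward direction the inductive step reduces to a single existence lemma: given a nonempty acyclic partial $G$-matching $(\Sigma,\mu)$ with critical set $\mathcal{F}(\mfK')$, there exists an orbit $\sigma G \subseteq \Sigma$ such that $\sigma$ is a free cell of $\mfK$ with $\varphi_\sigma = \mu(\sigma)$ and $\sigma G$ is independently free. Granting this lemma, the elementary $G$-collapse at $\sigma$ removes $\sigma G \cup \mu(\sigma)G$, and $\mu|_{\Sigma\setminus \sigma G}$ is readily checked to remain an acyclic partial $G$-matching on the resulting $G$-subcomplex with unchanged critical set (any cycle in the restriction is a cycle in $(\Sigma,\mu)$); the induction hypothesis then completes the collapse down to $\mfK'$. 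To produce such $\sigma$, I would first pick $\sigma_0 \in \Sigma$ with $\mu(\sigma_0)$ inclusion-maximal in $\mathcal{F}(\mfK) \setminus \mathcal{F}(\mfK')$. No cell of $\mathcal{F}(\mfK) \setminus \mathcal{F}(\mfK')$ can be a proper coface of $\mu(\sigma_0)$ by maximality, and since $\mfK'$ is closed under faces no cell of $\mathcal{F}(\mfK')$ can either, so $\mu(\sigma_0)$ is a facet of $\mfK$. If $\sigma_0$ is not free, a second facet $\tau'$ containing $\sigma_0$ cannot be critical (else $\sigma_0 \in \mfK'$) and cannot lie in $\Sigma$ (a facet has no proper coface to be matched with), so $\tau' = \mu(\sigma_1)$ for some $\sigma_1 \neq \sigma_0$ with $\mu(\sigma_1)$ again a facet; this yields an arrow $\sigma_1 \to \sigma_0$ in the digraph ``$x \to y$ iff $\mu(x)\succ y$ and $x\neq y$''. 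Iterating generates a sequence $\sigma_0,\sigma_1,\sigma_2,\ldots$ with $\sigma_{i+1}\to \sigma_i$ throughout; any repetition $\sigma_i = \sigma_j$ with $j<i$ would splice into a directed cycle, which acyclicity forbids, so finiteness of $\Sigma$ terminates the process at a free $\sigma_k$ with $\varphi_{\sigma_k} = \mu(\sigma_k)$. Independent freeness of $\sigma_k G$ then follows from $G$-equivariance and injectivity of $\mu$: a common coface of distinct $\sigma_k g, \sigma_k g'$ must lie in $\mu(\sigma_k)g \cap \mu(\sigma_k)g'$, whereupon the face-intersection axiom and a short dimension count force $\sigma_k g = \sigma_k g'$.

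The main obstacle is the termination argument for the refinement $\sigma_0,\sigma_1,\sigma_2,\ldots$: the naive maximal choice $\sigma_0$ need not itself be free, and it is precisely the acyclicity of $\mu$ that prevents the iteration from cycling indefinitely. The remaining verifications, namely well-definedness and $G$-equivariance of $\mu$, preservation of acyclicity under restriction to $\Sigma\setminus \sigma G$, and invariance of the critical set under a single collapse, are routine bookkeeping.
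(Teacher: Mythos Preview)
Your forward direction is correct and in fact more detailed than the paper's, which only spells out injectivity of $\mu$ and leaves acyclicity to the reader.

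Your backward direction takes a genuinely different route from the paper. The paper builds $\mfK$ up from $\mfK'$: it looks among the \emph{minimal} cells of $\mathcal{F}(\mfK)\setminus\mathcal{F}(\mfK')$ for one whose only coface (outside what has already been added) is its $\mu$-partner, and argues that acyclicity guarantees such a cell. You instead work top-down, seeking a free cell in $\mfK$ by starting from a $\sigma_0$ with $\mu(\sigma_0)$ maximal and iterating. Both strategies are standard in discrete Morse theory and lead to the same conclusion; yours has the advantage that the inductive object at each stage is the complex one is actually collapsing, so the ``is this still a simplicial complex'' check is trivial.

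There is, however, a small gap in your iteration step. From ``$\sigma_0$ not free'' you produce a second \emph{facet} $\tau'=\mu(\sigma_1)$ containing $\sigma_0$, and then assert $\mu(\sigma_1)\succ\sigma_0$. But a facet $\tau'$ containing $\sigma_0$ need not cover it: nothing prevents $\dim\tau'>\dim\sigma_0+1$, since ``inclusion-maximal'' does not mean ``of maximal dimension''. So the arrow $\sigma_1\to\sigma_0$ in your digraph is not yet justified, and a repetition in the sequence $\sigma_0,\sigma_1,\ldots$ does not literally produce a cycle in the sense of the acyclicity hypothesis. The fix is short: either (i) choose $\mu(\sigma_0)$ of \emph{maximal dimension} in $\mathcal{F}(\mfK)\setminus\mathcal{F}(\mfK')$, so that every other facet through $\sigma_0$ has the same dimension and hence covers it; or (ii) keep your construction but observe that $\sigma_{i-1}\subsetneq\mu(\sigma_i)$ forces $\dim\sigma_{i-1}\le\dim\sigma_i$, so on any repeated segment the dimensions are constant and each containment $\mu(\sigma_i)\supsetneq\sigma_{i-1}$ is in fact a cover relation, yielding the forbidden cycle. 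With either adjustment your argument goes through.
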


\begin{proof}
First, we assume that $\mathsf{K}$ $G$-collapses onto $\mathsf{K}'$. Then we have a sequence of elementary $G$-collapsings
\begin{equation*}
\mathsf{K}=\mathsf{K}_0 \searrow_G \mathsf{K}_1 \searrow_G \mathsf{K}_2 \searrow_G \cdots \searrow_G \mathsf{K}_k=\mathsf{K}';
\end{equation*}
and we can find simplices $\sigma_0,\sigma_1,\ldots,\sigma_k$ in $\mathsf{K}$ such that, for each $i\in [k]$, $\sigma_i$ is free in $\mathsf{K}_i$; $\dim \varphi_{\sigma_i}=\dim \sigma_i+1$; $\sigma_i G$ is independently free; and $\mathsf{K}_{i+1}=\mathsf{K}_i \setminus ( \sigma_i G \cup \varphi_{\sigma_i} G)$. Let $\Sigma=\coprod_{i=0}^k \sigma_i G$; and $\mu : \Sigma \rightarrow \mathcal{F}(\mathsf{K}) \setminus \Sigma$ be defined by $\mu(\sigma_i g)=\varphi_{\sigma_i} g$. Then the pair $(\Sigma,\mu)$ is an acyclic partial $G$-matching of $\mathcal{F}(\mathsf{K})$ whose set of critical elements is $\mathcal{F}(\mathsf{K}')$.\par
We state here only a proof of $\mu$ being injective: note first that, if we let $i<j$, we find that, for any $g,g'\in G$, $\varphi_{\sigma_i} g\not\in K_j$ while $\varphi_{\sigma_j} g'\in K_j$, so $\varphi_{\sigma_i}g\not=\varphi_{\sigma_j} g'$. Hence, $\mu(G\sigma_i)\cap \mu(G\sigma_j)=\varnothing$. Then, it suffices to verify the injectivity of each restriction $\mu|_{\sigma_i G }$. \par
Suppose that there exist $g,g'\in G$ such that $\mu(\sigma_i g)=\mu(\sigma_i g')$, that is, $\varphi_{\sigma_i} g=\varphi_{\sigma_i}g'$. Then, $\varphi_{\sigma_i} g$ is a simplex in $K_i$ containing both $\sigma_i g$ and $\sigma_i g'$. Since $\sigma_i G$ is independently free, we must have $\sigma_i g=\sigma_i g'$. \par

Let us prove the converse. 
Let $(\Sigma,\mu)$ be an acyclic $G$-matching on $\mathcal{F}(\mathsf{K})$ whose set of critical elements is $\mathcal{F}(\mathsf{K}')$. 
We give here an algorithm to construct $\mathsf{K}$ from its subcomplex $\mathsf{K}'$.\par
Let $Q$ be the set of  elements of $\Sigma$ already added to $\mathsf{K}'$ and $W$ the set of minimal elements in $\mathcal{F}(\mathsf{K})\setminus \mathcal{F}(\mathsf{K}')$. 
Suppose first $Q=\varnothing$. We can find $\tau\in W$ such that, for any $g\in G$, $\mu(\tau g)=\mu(\tau) g$ is the only simplex covering $\tau g$; if not, we can choose elements of $W$ contradicting the assumption that $(\Sigma,\mu)$ is acyclic. \par   
Set $\mathsf{\bar{K}}=\mathsf{K}'\cup \tau G \cup \mu(\tau) G$. This $\mathsf{\bar{K}}$ is a simplicial $G$-complex: if there were a proper face of $\tau g$ in $\mathcal{F}(\mathsf{K})\setminus \mathcal{F}(\mathsf{K}')$, then $\tau g$ cannot be minimal in $\mathcal{F}(\mathsf{K})\setminus \mathcal{F}(\mathsf{K}')$, contradicting $\tau g\in W$. Moreover, the orbit $\tau G$ is a collection of free faces which is independently free: since $\mu$ is injective and $G$-equivariant, $\tau g \not= \tau g'$ implies that $\mu(\tau) g\not= \mu(\tau) g'$, that is, no facets in $\mathcal{F}(\mathsf{\bar{K}})$ cover both $\tau g$ and $\tau g'$ if $g \not= g'$. So we can conclude that $\mathsf{\bar{K}}$ elementary $G$-collapses onto $\mathsf{K}'$. \par
Delete all elements in $\tau G$ from $W$, set $Q:=Q\cup \tau G \cup \mu(\tau) G,\ \mathsf{K}'=\mathsf{\bar{K}}$, and repeat our argument until $W=\varnothing$. 
If $W=\varnothing$, take a new $W$ of minimal elements in $\mathcal{F}(\mfK) \setminus (\mathcal{F}(\mfK')\cup Q)$ and continue our argument until $Q=\mathcal{F}(\mathsf{K}) \setminus \mathcal{F}(\mathsf{K}')=\Sigma\cup \mu(\Sigma)$; and we obtain a sequence of elementary $G$-collapsings leading from $\mathsf{K}$ to $\mathsf{K}'$.
\end{proof}
By this proposition, if one wants to verify that two simplicial $G$-complexes have the same simple homotopy types, it suffices to construct an acyclic partial $G$-matching on their face posets. Now we give a construction for our main result:
\begin{lem}
For an $r$-graph $H$, $\text{sd}\,\mathsf{B}_{\text{edge}}(H)$ $S_r$-collapses onto $\Delta(i(P_{K_r^r,H}))$.
\end{lem}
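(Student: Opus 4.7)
The plan is to construct an $S_r$-equivariant acyclic partial matching on $\mathcal{F}(\text{sd}\,\mathsf{B}_{\text{edge}}(H))$ whose set of critical cells equals $\mathcal{F}(\Delta(i(P_{K_r^r,H})))$, and then invoke the preceding proposition. Recall that a simplex of $\text{sd}\,\mathsf{B}_{\text{edge}}(H)$ is a chain $C=(F_0<F_1<\cdots<F_k)$ in $\mathcal{F}(\mathsf{B}_{\text{edge}}(H))$, and $\Delta(i(P_{K_r^r,H}))$ consists of those chains whose every entry is a \emph{box}, i.e., equals its image $\bar F:=i(p(F))=\prod_{j=1}^{r}\text{pr}_j(F)$ under the closure operator $i\circ p$. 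Note that $F\subseteq\bar F$ with equality iff $F$ is a box, that $\bar F\subseteq\bar{F'}$ whenever $F\subseteq F'$, and that $\overline{F\sigma}=\bar F\sigma$ for every $\sigma\in S_r$.

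For a chain $C$ not composed entirely of boxes, let $l(C)$ be the \emph{largest} index at which $F_{l(C)}$ fails to be a box, and set $G(C):=\bar{F_{l(C)}}$. Define
\[
\Sigma:=\{\,C \mid C\text{ is not a box chain and }G(C)\notin C\,\},\qquad \mu(C):=C\cup\{G(C)\},
\]
declaring the box chains to be critical. The choice of the \emph{largest} non-box is what makes $\mu(C)$ a valid chain: every $F_j$ with $j>l(C)$ is a box containing $F_{l(C)}$, so $F_j=\bar{F_j}\supseteq\bar{F_{l(C)}}=G(C)$ by monotonicity, and $G(C)$ slots in between $F_{l(C)}$ and $F_{l(C)+1}$. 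The equivariance $\mu(C\sigma)=\mu(C)\sigma$ is immediate from $\overline{F\sigma}=\bar F\sigma$. Injectivity of $\mu$, together with $\Sigma\cap\mu(\Sigma)=\varnothing$, follows from the observation that $\mu(C)$ has exactly the same non-box elements as $C$, so $G(\mu(C))=G(C)\in\mu(C)$ and $C=\mu(C)\setminus\{G(\mu(C))\}$ is recovered from $\mu(C)$. The cover relation $\mu(C)\succ C$ is manifest. Moreover, $\mu(\Sigma)$ is precisely the set of non-box chains $C$ with $G(C)\in C$, so the remaining cells, which are the box chains, are indeed critical.

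The main obstacle is acyclicity. Suppose for contradiction that there is a cycle $x_0,x_1,\ldots,x_t\in\Sigma$ with $\mu(x_i)\succ x_{i+1}$ (indices mod $t+1$). Writing $\mu(x_i)=x_i\cup\{G_i\}$, each $x_{i+1}$ is obtained from $\mu(x_i)$ by deleting a single element $H_i\ne G_i$; in particular $H_i\in x_i$ and $G_i\in x_{i+1}$. I will argue that $H_i$ must be the largest non-box entry $F_{l_i}^{(i)}$ of $x_i$: in every other case the largest non-box entry of $x_{i+1}$ is still $F_{l_i}^{(i)}$, whence $G(x_{i+1})=G_i\in x_{i+1}$, contradicting $x_{i+1}\in\Sigma$. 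Therefore $H_i=F_{l_i}^{(i)}$, and the new largest non-box entry $F_{j'}^{(i)}$ of $x_{i+1}$ satisfies $j'<l_i$, so $G_{i+1}=\bar{F_{j'}^{(i)}}\subseteq\bar{F_{l_i}^{(i)}}=G_i$; equality is excluded because it would again force $G_{i+1}\in x_{i+1}$. Hence $G_{i+1}\subsetneq G_i$ for every $i$, producing the impossible loop $G_0\supsetneq G_1\supsetneq\cdots\supsetneq G_t\supsetneq G_0$. This establishes acyclicity, and the preceding proposition then yields the desired $S_r$-collapsing.
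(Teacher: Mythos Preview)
Your proof is correct and considerably simpler than the paper's. Both arguments build an $S_r$-equivariant acyclic partial matching on $\mathcal{F}(\text{sd}\,\mathsf{B}_{\text{edge}}(H))$ whose critical cells are exactly the chains of boxes, but the constructions differ in one key choice: the paper takes $l(F)$ to be the \emph{minimal} index where $F_l$ fails to be a box, whereas you take the \emph{maximal} such index. The paper's choice means that $i\circ p(F_{l(F)})$ need not be comparable with later entries of the chain, forcing the introduction of the auxiliary index $r(F)$, the two-case split $\Sigma_1,\Sigma_2$, and the intersected element $i\circ p(F_{l(F)})\cap F_{l(F)+r(F)+1}$; the acyclicity verification then runs through three main cases and several sub-cases. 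Your choice of the maximal index guarantees that every later entry is already a box and hence contains $\bar{F_{l(C)}}$, so a single formula for $\mu$ suffices and acyclicity follows from the one-line monotonicity observation $G_{i+1}\subsetneq G_i$. The trade-off is minimal: the paper's matching is perhaps more ``local'' in that it processes the chain from the bottom up, but your argument is shorter, avoids the case analysis entirely, and uses nothing beyond the closure-operator properties $F\subseteq\bar F$, $\bar{\bar F}=\bar F$, and monotonicity that are already available from $p\circ i=1$.
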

\begin{proof}
Since $\Delta(i(P_{K_r^r,H}))$ is a $S_r$-subcomplex of $\text{sd}\,B_\text{edge}(H)$, we will construct an acyclic partial $S_r$-matching on $\mathcal{F}(\text{sd}\,B_\text{edge}(H))$ whose set of critical elements is $\mathcal{F}(\Delta(i(P_{K_r^r,H})))$.  \par
Note first that, for any chain $A$ of $\text{sd}\,\mathsf{B}_{\text{edge}}(H)$, $A$ is a chain of $\Delta(i(P_{K_r^r,H}))$ if and only if $i\circ p(A_k)=A_k$ for any $k\in [\# A]$. Indeed, if $A$ is a chain of $\Delta(i(P_{K_r^r,H}))$, then we can choose $\varphi_k\in P_{K_r^r,H}$ and write $A_k=i(\varphi_k)$ for each $k\in [\# A]$. Since $p\circ i=1$, we obtain $i\circ p(A_k)=A_k$. The converse holds by the definitions of $i$ and $p$. \par
To achieve our purpose, it suffices to construct an acyclic partial $S_r$-matching which matches chains not belonging to  $\Delta(i(P_{K_2,G}))$. First, we define a subset $D$ of $ \mathcal{F}( \text{sd}\,\mathsf{B}_{\text{edge}}(H))$ by
\begin{equation*}
D=\{ F\in \mathcal{F}( \text{sd}\,\mathsf{B}_{\text{edge}}(H))\,|\,i\circ p(F_j)\not=F_j\ \text{for some}\ j\in [\# F] \}.
\end{equation*}
$D=\varnothing$ implies that $\Delta(i(P_{K_r^r,H}))$ and $\text{sd}\,\mathsf{B}_{\text{edge}}(H)$ are the same. We assume $D\not=\varnothing$. For any $F\in D$, we let $l(F)$ denote the minimal index $l$ such that $i\circ p(F_l)\not=F_l$, and $r(F)$ the maximal index $r$ such that $F_{l(F)+r}$ is included in $i\circ p(F_{l(F)})$. With these indices, we define $\Sigma_1,\Sigma_2\subset D$ as follow:
\begin{align*}
\Sigma_1&=\{F\in D\,|\,l(F)+r(F)=\# F,\ i\circ p(F_{l(F)})\in F \};\\
\Sigma_2&=\left\{F\in D\,|\,l(F)+r(F)<\# F,\ i\circ p(F_{l(F)})\cap F_{l(F)+r(F)+1}\in F \right\}.
\end{align*}

Now we define a map $\mu : \Sigma_1\cup \Sigma_2\rightarrow \mathcal{F}(\text{sd}\,B_\text{edge}(H))\setminus (\Sigma_1 \cup \Sigma_2)$ as  
\begin{equation*}
\mu(F)=\begin{cases}
F\cup \{i\circ p(F_{l(F)}) \} &\text{if}\ F\in\Sigma_1;\\
F\cup \{i\circ p(F_{l(F)})\cap F_{l(F)+r(F)+1} \} &\text{if}\ F\in\Sigma_2.
\end{cases}
\end{equation*}
We claim that the pair $(\Sigma_1\cup \Sigma_2,\mu)$ is an acyclic partial $S_r$-matching on $\mathcal{F}(\text{sd}\,\mathsf{B}_\text{edge}(H))$.\par
We first check that $\Sigma_1\cup \Sigma_2$ is an $S_r$-subset of $\mathcal{F}(\text{sd}\,\mathsf{B}_\text{edge}(H))$: 
let $F=\{F_0,F_1,\ldots,F_{\# F} \}$ be an element of $\Sigma_1\cup \Sigma_2 \subset \mathcal{F}(\text{sd}\,\mathsf{B}_\text{edge}(H))$ satisfying $F_0\subset F_1 \subset \ldots \subset F_{\# F}$ and $\sigma\in S_r$. Then $F \sigma=\{F_0 \sigma,F_1 \sigma,\ldots,F_{\# F} \sigma \}$ is a chain of $\text{sd}\,\mathsf{B}_\text{edge}(H)$. Since $F\in D$, we can take an index $j$ with $i\circ p(F_j)\not=F_j$. Then $S_r$-equivariance of $i\circ p$ implies that $i\circ p(F_j \sigma)\not=F_j\sigma$. So $F\sigma \in D$. \par
Now suppose $F\in\Sigma_1$. The condition $l(F\sigma)+r(F\sigma)=\# F$ holds because of the bijectivity of $\sigma$. Since $i\circ p(F_{l(F)})\not\in F$, $(F\sigma)_{l(F\sigma)}=F_{l(F)}\sigma$ and $i\circ p$ is $S_r$-equivariant, we have $i\circ p((F\sigma)_{l(F\sigma)})\not\in F\sigma$, and so $F\sigma\in \Sigma_1$. Next let $F\in\Sigma_2$. The condition  $l(F\sigma)+r(F\sigma)<\# F$ is obvious. The second condition comes from the following calculation:
\begin{align*}
i \circ p((F\sigma)_{l(F\sigma)})\cap (F\sigma)_{l(F \sigma)+r(F \sigma)+1}&=i \circ p(F_{l(F)}\sigma)\cap F_{l(F)+r(F)+1}\sigma \\
&=i \circ p(F_{l(F)})\sigma \cap F_{l(F)+r(F)+1}\sigma \\
&=(i\circ p(F_{l(F)}) \cap F_{l(F)+r(F)+1}) \sigma \not\in F\sigma.
\end{align*}
So $F\sigma \in \Sigma_2$. Summing up, $\Sigma_1\cup \Sigma_2$ is an $S_r$-subset.\par
Next, we must verify that $\mu$ satisfies the condition for being a partial $S_r$-matching: First we find that both $i\circ p(F_{l(F)})$ for $F\in\Sigma_1$ and $i\circ p(F_{l(F)})\cap F_{l(F)+r(F)+1}$ for $F\in\Sigma_2$ are simplices of $B_\text{edge}(H)$. Hence, $\mu(F)$ is a chain in $\text{sd}\,B_\text{edge}(H)$ with relation 
\begin{equation}\label{mu_sigma1}
F_0 \subset \ldots \subset F_{l(F)} \subset \ldots \subset \cdots \subset F_{\# F} \subset i\circ p(F_{l(F)})
\end{equation}
for $F\in \Sigma_1$, and 
\begin{align}\label{mu_sigma2}
&F_0\subset \ldots \subset F_{l(F)} \subset \ldots \subset F_{l(F)+r(F)} \subset i\circ p(F_{l(F)})\cap F_{l(F)+r(F)+1} \subset F_{l(F)+r(F)+1} \subset \ldots \subset F_{\# F}.
\end{align}
for $F\in \Sigma_2$. We can see from the relations \eqref{mu_sigma1} and \eqref{mu_sigma2} that, for any $F\in \Sigma_1\cup \Sigma_2$, $\mu(F)$ covers $F$ but is not a chain in $\Sigma_1\cup \Sigma_2$; moreover, $F_1\in \Sigma_1$ and $F_2\in \Sigma_2$ imply that $\mu(F_1)\not= \mu(F_2)$.  If we suppose that both $F_1$ and $F_2$ belong to $\Sigma_j\ (j=1,2)$ satisfying $\mu(F_1)=\mu(F_2)$, then we find that the inserted terms to obtain $\mu(F_1)$ and $\mu(F_2)$ are in the same index. This yields that $F_1=F_2$, and so $\mu$ is injective. This $\mu$ is $S_r$-equivariant because of the following calculations: if $F\in\Sigma_1$,
\begin{align*}
\mu(F\sigma)&=F\sigma \cup \{i\circ p((F\sigma )_{l(F\sigma )}) \}\\
&=F\sigma \cup \{i\circ p(F_{l(F)})\sigma \}\\
&=(F \cup \{i\circ p(F_{l(F)}) \})\sigma=\mu(F)\sigma.
\end{align*}
If $F\in \Sigma_2$, we have
\begin{align*}
\mu(F\sigma)&=F\sigma \cup \{ i\circ p( (F\sigma )_{l(F\sigma )} ) \cap (F\sigma )_{l(F\sigma )+r(F \sigma )+1 } \}  \\
&=F\sigma \cup \{ ( i \circ p(F_{l(F)})\cap F_{l(F)+r(F)+1}  )\sigma \} \\
&=(F \cup \{ ( i \circ p(F_{l(F)})\cap F_{l(F)+r(F)+1}) \} )\sigma=\mu(F)\sigma. 
\end{align*}
Finally, we find that $\Sigma_1\cup \Sigma_2 \cup \mu(\Sigma_1\cup \Sigma_2)=D$, and we can conclude that the pair $(\Sigma_1\cup \Sigma_2,\mu)$ is a partial $S_r$-matching on $\mathcal{F}(\text{sd}\,B_\text{edge}(H))$ whose set of critical elements is $\mathcal{F}(\Delta(i(P_{K_r^r,H})))$. \par
It remains to prove that the matching is acyclic: suppose that there exists a sequence of distinct elements $F^0,F^1,\ldots,F^t \in \Sigma_1 \cup \Sigma_2\ t\geq 1$ such that
\begin{equation*}
\mu(F^0)\succ F^1,\mu(F^1) \succ F^2,\ \ldots\ ,\ \mu(F^{t-1})\succ F^t\ \text{and}\ \mu(F^t) \succ F^0.
\end{equation*}

For each $j\in [t-1]$, since $\mu(F^j)$ covers both $F^j$ and $F^{j+1}$ which are distinct, we can choose a simplex $A_j\in F^j$ such that $F^{j+1}=\mu(F^j)\setminus \{A_j \}$. Similarly, $A_t \in F^t$ can be chosen such that $F^0=\mu(F^t)\setminus \{A_t \}$.\par
It is useful if we know what are $A_j,\ j\in [t]$: we claim here that
\begin{equation*}
A_j=\begin{cases}
F^j_{l(F^j)}&\ \text{if}\ F^j\in\Sigma_1; \\
F^j_{l(F^j)+r(F^j)+1}\ \text{or}\ F^j_{l(F^j)}&\ \text{if}\ F^j\in \Sigma_2.
\end{cases}
\end{equation*}
In fact, for $F^j\in \Sigma_1$, if $A_j$ were not $F^j_{l(F^j)}$, it follows from the equation \eqref{mu_sigma1} that $F^{j+1}_{l(F^{j+1})}=F^j_{l(F^j)}$, and so $i\circ p(F^{j+1}_{l(F^{j+1})}) \in F^{j+1}$; hence $F^{j+1} \not\in \Sigma_1$. Since $i\circ p(F^{j}_{l(F^j)})$ contains all simplices in $F^j$, we obtain $F^{j+1}\not\in \Sigma_2$. Therefore $F^{j+1}\not\in \Sigma_1\cup \Sigma_2$, contradicting to the assumption of $F^{j+1}$. For $F^j\in\Sigma_2$, if $A_j$ were not $F^j_{l(F^j)}$ and $F^j_{l(F^j)+r(F^j)+1}$, it follows from the equation \eqref{mu_sigma2} that  $F^{j+1}_{l(F^{j+1})}=F^j_{l(F^j)}$. So $F^{j+1}\not\in \Sigma_1$ because the simplex $F^{j+1}_{l(F^{j+1})+r(F^{j+1})+1}$ still exists. Moreover, we obtain $F^{j+1}_{l(F^{j+1})+r(F^{j+1})+1}=F^j_{l(F^j)+r(F^j)+1}$, and then $i\circ p(F^{j+1}_{l(F^{j+1})}) \cap F^{j+1}_{l(F^{j+1})+r(F^{j+1})+1} \in F^{j+1}$. Hence $F^{j+1}\not\in \Sigma_2$. Summing up, $F^{j+1}\not\in \Sigma_1\cup \Sigma_2$, which contradicts to the assumption of $F^{j+1}$. \par

We can see from the above remark on $A_j$ that, if $F^j\in\Sigma_2$, $F^{j+1}$ can be a chain in either $\Sigma_1$ or $\Sigma_2$, while, if $F^j\in \Sigma_1$,  $F^{j+1}$ can be a chain only in $\Sigma_1$ because $i\circ p(F^{j+1}_{l(F^{j+1})})$ contains $i\circ p(F^j_{l(F^j)})$, which contains all $F^j_k\ (k\in [ \# F^j ])$. Similarly, $F^t\in \Sigma_1$ implies that $F^0\in \Sigma_1$. Then we can conclude that there are three cases on a set to which the chains $F^0, \ldots, F^t$ belongs, as follows:
\begin{enumerate}[(a)]
\item All $F^0,\ldots,F^t$ belong to $\Sigma_1$;
\item All $F^0,\ldots,F^t$ belong to $\Sigma_2$;
\item There exists $j\in [t-1]$ such that $F^j\in \Sigma_2$ but $F^{j+1}\in \Sigma_1$. 
\end{enumerate}  
We can find a contradiction for the case (c) at once because the fact that $F^k \in \Sigma_1$ whenever $F^{k-1} \in \Sigma_1$ implies that $F^j\in \Sigma_1$. For the case (a), considering the number $t(F^j)$ of indices $l$ such that $F^j_{l}\not=i\circ p(F^j_l)$, we obtain a contradiction $t(F^0)<t(F^0)$. \par
For the case (b), let we denote $s(F^j)$ the number of simplices in $F^j$ not contained in $i\circ p(F^j_{l(F^j)})$.
By assumption, we have $s(F^j)\geq 1$for any $j\in [t]$. By the assumption, each $A_j$ is $F^j_{l(F^j)}$ or $F^j_{l(F^j)+r(F^j)+1}$. If $A_j=F^j_{l(F^j)}$, the fact that $i\circ p(F^{j+1}_{l(F^{j+1})}) \supset i\circ p(F^j_{l(F^j)})$ implies that $s(F^{j+1})\leq s(F^j)$. If $A_j=F^j_{l(F^j)+r(F^j)+1}$, then we have $s(F^{j+1})=s(F^j)-1<s(F^j)$. Summing up, $F^0,\ldots,F^t\in \Sigma_2$ implies the following inequalities:
\begin{equation}\label{ineq_s}
s(F^0)\leq s(F^t) \leq \cdots \leq s(F^1) \leq s(F^0).
\end{equation}
We will get a contradiction if there exists a ``less than or equal to" sign which is really the ``less than" sign. We obtain the assertion at once if there is $j\in [t]$ with $A_j=F^j_{l(F^j)+r(F^j)+1}$. \par       
Assume that $A_j=F^j_{l(F^j)}$ for all $j\in [t]$. By definition, we can choose $F^{j+1}_{l(F^{j+1})}$ and $F^{j+1}_{l(F^{j+1})+r(F^{j+1})+1}$ in each $F^j$. However, we will get a contradiction 
\begin{equation*}
F^{j+1}\ni i \circ p(F^{j+1}_{l(F^{j+1})})\cap F^{j+1}_{l(F^{j+1})+r(F^{j+1})+1}
\end{equation*}
if there exists $j\in [t]$ such that either of these conditions holds:
\begin{enumerate}[(c1)]
\item $i \circ p(F^j_{l(F^j)})\cap F^j_{l(F^j)+r(F^j)+1}=i \circ p(F^{j+1}_{l(F^{j+1})})\cap F^{j+1}_{l(F^{j+1})+r(F^{j+1})+1}$, or
\item $i \circ p(F^{j+1}_{l(F^{j+1})})\cap F^{j+1}_{l(F^{j+1})+r(F^{j+1})+1}$ is distinct from $F^j_{l(F^j)}$ and is in $F^j$.
\end{enumerate}
Then we can assume that all $j\in [t]$ do not satisfy both conditions. Suppose that $s(F^0)=s(F^1)=\cdots=s(F^t)$. We find that $F^0_{l(F^0)+r(F^0)+1}$ is the minimal simplex not included in $i\circ p(F_{l(F^j)})$ for any $j\in [t]$. Paying attention to the simplices inserted to each chain, we find by our assumption that
\begin{align}
&i\circ p(F_{l(F^0)})\cap F_{l(F^0)+r(F^0)+1} \subsetneq i\circ p(F_{l(F^1)})\cap F_{l(F^0)+r(F^0)+1} \subsetneq \cdots  \nonumber \\
&\hspace{5ex} \subsetneq i\circ p(F_{l(F^t)})\cap F_{l(F^0)+r(F^0)+1}\subsetneq F_{l(F^0)+r(F^0)+1}. \label{contra_subset}
\end{align}     
Since $F_{l(F^0)+r(F^0)+1}$ is the minimal simplex not included in $i\circ p(F_{l(F^0)})$, we obtain
\begin{equation*}
i\circ p(F_{l(F^t)})\cap F_{l(F^0)+r(F^0)+1} \subset i\circ p(F_{l(F^0)}).
\end{equation*}
Then,
\begin{equation*}
i\circ p(F_{l(F^t)})\cap F_{l(F^0)+r(F^0)+1} \subset i\circ p(F_{l(F^0)}) \cap F_{l(F^0)+r(F^0)+1}.
\end{equation*}
With \eqref{contra_subset}, we thus obtain a contradiction $i\circ p(f_{l(F^0)}) \cap f_{l(F^0)+r(F^0)+1}\subsetneq i\circ p(f_{l(F^0)}) \cap f_{l(F^0)+r(F^0)+1}$. Therefore, in \eqref{ineq_s}, there exists a ``less than or equal to" sign which is really the ``less than" sign, and so we get a contradiction $s(F^0)<s(F^0)$. 

Summing up, our argument contradicts itself if we suppose that $(\Sigma_1\cup \Sigma_2,\mu)$ is not acyclic. 
\end{proof}
We depict an $S_r$-collapsing construced by the above acyclic partial $S_r$-matching for a part of $\text{sd}\,\mathsf{B}_\text{edge}(H),\ H=K^3_{2,2,1}$ as the following figure. Here we draw a hypergraph by edge-based drawings, see \cite{drawing}.
\begin{figure}[h]
\begin{center}
\includegraphics{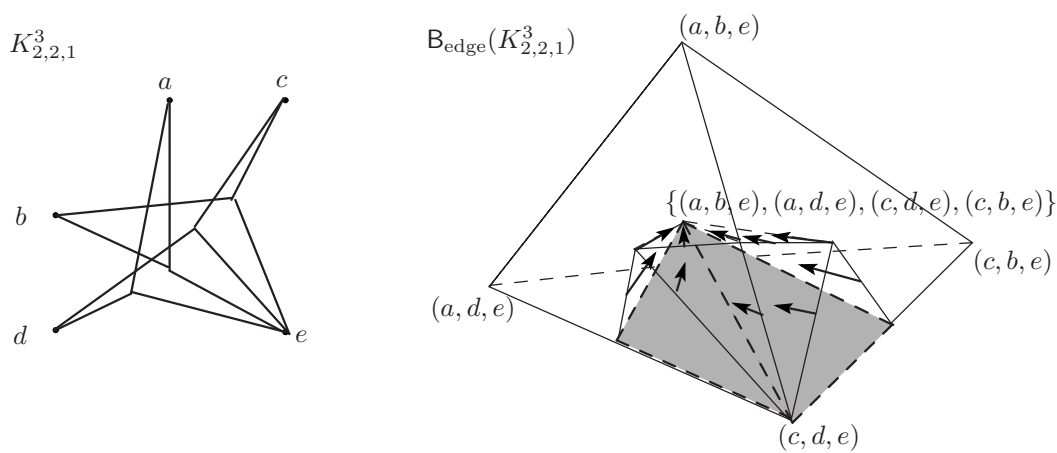}
\end{center}
\caption{$K^3_{2,2,1}$ and a part of the $S_3$-collapsing of $\text{sd}\,\mathsf{B}_\text{edge}(K^3_{2,2,1})$ onto $\Delta(i(P_{K_3^3,K^3_{2,2,1}}))$.} 
\end{figure}

We now complete our argument in all steps, obtaining a construction of a formal $S_r$-deformation between $\text{Hom}(K_r^{r},H)$ and $\mathsf{B}_{\text{edge}}(H)$. So the following conclusion holds:
\begin{thm}\label{main_thm}
For an $r$-graph $H$, the Hom complex $\text{Hom}(K_r^{r},H)$ and the box complex $\mathsf{B}_{\text{edge}}(H) $ have the same simple $S_r$-homotopy type. 
\end{thm}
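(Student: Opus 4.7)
The strategy, already signposted in Subsection 5.2, is to stitch together three formal $S_r$-deformations. First, I would invoke Proposition \ref{sd} twice, once for the polytopal $S_r$-complex $\text{Hom}(K_r^r,H)$ and once for the simplicial $S_r$-complex $\mathsf{B}_{\text{edge}}(H)$. This produces formal $S_r$-deformations
\[
\text{Hom}(K_r^r,H) \;\leftrightsquigarrow\; \text{sd}\,\text{Hom}(K_r^r,H), \qquad \mathsf{B}_{\text{edge}}(H) \;\leftrightsquigarrow\; \text{sd}\,\mathsf{B}_{\text{edge}}(H),
\]
reducing the theorem to comparing the two barycentric subdivisions.

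Next, I would use the identifications established in the section before the lemma: the $S_r$-equivariant poset isomorphism $\mathcal{F}(\text{Hom}(K_r^r,H)) \cong P_{K_r^r,H}$ gives an $S_r$-isomorphism of simplicial complexes $\text{sd}\,\text{Hom}(K_r^r,H) \cong \Delta(P_{K_r^r,H})$, and the $S_r$-equivariant injectivity of $i$ (with $p \circ i = \text{id}$) gives a further $S_r$-isomorphism $\Delta(P_{K_r^r,H}) \cong \Delta(i(P_{K_r^r,H}))$, which sits inside $\text{sd}\,\mathsf{B}_{\text{edge}}(H)$ as an $S_r$-subcomplex. The preceding lemma then yields, via the acyclic partial $S_r$-matching $(\Sigma_1 \cup \Sigma_2,\mu)$, a sequence of elementary $S_r$-collapsings from $\text{sd}\,\mathsf{B}_{\text{edge}}(H)$ onto this $S_r$-subcomplex, i.e.\ onto $\text{sd}\,\text{Hom}(K_r^r,H)$.

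Concatenating the three pieces — expand $\mathsf{B}_{\text{edge}}(H)$ to $\text{sd}\,\mathsf{B}_{\text{edge}}(H)$, $S_r$-collapse down to $\Delta(i(P_{K_r^r,H})) \cong \text{sd}\,\text{Hom}(K_r^r,H)$, and then collapse $\text{sd}\,\text{Hom}(K_r^r,H)$ back to $\text{Hom}(K_r^r,H)$ — produces the required formal $S_r$-deformation between $\text{Hom}(K_r^r,H)$ and $\mathsf{B}_{\text{edge}}(H)$. All the substantive work has already been absorbed into Proposition \ref{sd} and the preceding lemma, so the theorem itself is a pure concatenation argument. The only point demanding vigilance is that each identification and each collapse in the chain is genuinely $S_r$-equivariant; this is guaranteed respectively by the equivariance built into Proposition \ref{sd}, the $S_r$-equivariance of $p$ and $i$ verified in the comparison subsection, and the $S_r$-equivariance of $\mu$ checked inside the lemma. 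The main obstacle was therefore never in the theorem itself but in the construction of the matching in the lemma — especially the acyclicity verification, which requires the delicate case analysis on whether $A_j = F^j_{l(F^j)}$ or $F^j_{l(F^j)+r(F^j)+1}$ — and once that hurdle is cleared the present theorem falls out immediately.
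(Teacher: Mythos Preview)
Your proposal is correct and mirrors the paper's approach exactly: the theorem is stated immediately after the lemma with the remark that ``we now complete our argument in all steps,'' i.e.\ it is obtained by concatenating Proposition~\ref{sd} (applied to both complexes), the $S_r$-isomorphism $\text{sd}\,\text{Hom}(K_r^r,H)\cong \Delta(i(P_{K_r^r,H}))$, and the $S_r$-collapse of the lemma. You have also correctly located where the real work lies.
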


\section{Acknowledgement} 
The author would like to express his gratitude to his advisor, Professor \mbox{Dai Tamaki} and Professor \mbox{Katsuhiko Kuribayashi} for many helpful suggestions and many opportunities of discussions concerning this research. The author also wishes to thank the organizers of Symposium on Homotopy Theory for giving me an opportunity to give a talk and have useful discussions concerning with this paper.   
            
\providecommand{\bysame}{\leavevmode\hbox to3em{\hrulefill}\thinspace}
\providecommand{\MR}{\relax\ifhmode\unskip\space\fi MR }
\providecommand{\MRhref}[2]{%
  \href{http://www.ams.org/mathscinet-getitem?mr=#1}{#2}
}
\providecommand{\href}[2]{#2}

\end{document}